\newtheorem{theorem}{Theorem}[section]
\newtheorem{lemma}[theorem]{Lemma}
\newtheorem{cor}[theorem]{Corollary}
\theoremstyle{definition}
\theoremstyle{remark}
\numberwithin{equation}{section}
\def\bC{\mathbb{C}}
\def\bM{\mathbb{M}}
\def\bR{\mathbb{R}}
\def\bH{\mathbb{H}}
\begin{document}
\baselineskip=15pt

\title{ Positive  matrices partitioned into a small number of Hermitian blocks }

\author{Jean-Christophe Bourin{\footnote{Supported by ANR 2011-BS01-008-01.}},  Eun-Young Lee{\footnote{Research  supported by  Basic Science Research Program
through the National Research Foundation of Korea (NRF) funded by the Ministry of Education,
Science and Technology (2010-0003520)}}, Minghua Lin}

\date{ }

\maketitle

\begin{abstract}
\noindent Positive semidefinite matrices partitioned into a small number of Hermitian blocks have a remarkable property. Such a matrix  may be written in a  simple way from the sum  of its diagonal blocks: the full matrix  is a kind of average of  copies of the sum of the diagonal blocks.  This  entails several eigenvalue inequalities. The proofs use a decomposition lemma for positive  matrices, isometries with complex entries, and the Pauli matrices.
\end{abstract}

{\small\noindent
Keywords: Matrix inequalities, partial trace, positive definite matrices,  norm, quaternions.

\noindent
AMS subjects classification 2010:  15A60, 47A30,  15A42.}

\section{Introduction}

\noindent
Positive  matrices partitioned into blocks frequently occur as basic tools or for their own interest in matrix analysis and mathematical physics. For instance,   to define the geometric mean
of two $n\times n$ matrices $A,B\in\bM_n^+$, the space of positive semidefinite matrices, one consider the class of block-matrices 
\begin{equation}\label{block1}
\begin{bmatrix}
A&X\\ X&B
\end{bmatrix}
\end{equation}
belonging to $\bM_{2n}^+$ (hence $X$ is Hermitian). The geometric mean of $A$ and $B$ is then characterized as the largest possible $X$ such that  \eqref{block1} is positive. Positive matrices $H$ of the form \eqref{block1} enjoy a remarkable property: for all symmetric norms
\begin{equation}\label{norm1}
\left\| H\right\|
\le \| A+B \|.
\end{equation}
This  says that we have a majorisation  between $H$ and the sum of its diagonal block,
\begin{equation}\label{major1}
\sum_{j=1}^k \lambda_j(H) \le \sum_{j=1}^k\lambda_j(A+B), \qquad k=1,\ldots,2n
\end{equation}
where $\lambda_j(T)$  is the $j$th eigenvalue of $T\in\bM_m^+$ (in decreasing order, with the  convention $\lambda_j(T)=0$ for $j>m$).

Another typical example of positive matrices written in blocks are formed by tensor products. Indeed, 
 the tensor product $A\otimes B$ of $A\in\bM_{\beta}$ with $B\in\bM_n$ can be identified with an element of $\bM_{\beta}(\bM_n)=\bM_{\beta n}$. Starting with positive matrices in  $\bM_{\beta}^+$ and $\bM_n^+$ we then get a matrix in $\bM_{\beta n}^+$ partitioned in blocks in $\bM_n$. In quantum physics, sums of tensor products of positive semi-definite (with trace one) occur as so-called separable states. In this setting, the sum of the diagonal block is called the partial trace (with respect to $\bM_{\beta}$).  Hiroshima \cite{H} proved a beautiful extension of \eqref{norm1}-\eqref{major1}:

\vskip 10pt
\begin{theorem}\label{Hiroshima} Let $H=[A_{s,t}]\in \bM_{\alpha n}^+$ be partitioned into $\alpha\times \alpha$  Hermitian blocks in $\bM_n$ and let $\Delta=\sum_{s=1}^{\alpha}{A_{s,s}}$ be its partial trace.   Then,  we have
\begin{equation*}
\left\|H \right\|\le \left\| \Delta\right\|
\end{equation*} for all symmetric norms.
\end{theorem}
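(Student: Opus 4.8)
The plan is to convert the Hermitian‑block hypothesis into a symmetry of a Gram matrix and then exhibit $H$ as a genuine average of unitary copies of $\Delta$. Write $C=H^{1/2}\in\bM_{\alpha n}$ and let $C_1,\dots,C_\alpha\in\bM_{\alpha n,n}$ be its block columns, so that $A_{s,t}=C_s^*C_t$, $\ \Delta=\sum_{s}C_s^*C_s$, and $\sum_{s}C_sC_s^*=CC^*=H$. The whole content of the hypothesis is that each $A_{s,t}$ is Hermitian, i.e. $C_s^*C_t=C_t^*C_s$ for all $s,t$; this symmetry is the only structural fact I will use. I also recall the elementary identity $\|Z^*Z\|=\|ZZ^*\|$, valid for every symmetric norm and every rectangular $Z$, since $Z^*Z$ and $ZZ^*$ share the same nonzero eigenvalues.

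First I would settle $\alpha=2$, which already contains the mechanism. Put $Z=C_1+iC_2$ and $Z'=C_1-iC_2$. Using $C_1^*C_2=C_2^*C_1$, the cross terms cancel in the products $Z^*Z$ and $Z'^*Z'$, leaving $Z^*Z=Z'^*Z'=C_1^*C_1+C_2^*C_2=\Delta$; hence $\|ZZ^*\|=\|Z'Z'^*\|=\|\Delta\|$ for every symmetric norm. Dually, the imaginary parts cancel in the \emph{sum}, $\tfrac12(ZZ^*+Z'Z'^*)=C_1C_1^*+C_2C_2^*=H$, so by the triangle inequality $\|H\|\le\tfrac12(\|ZZ^*\|+\|Z'Z'^*\|)=\|\Delta\|$. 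This is exactly the promised picture: $H$ is the average of two copies of $\Delta$.

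For general $\alpha$ the scalar phases $\pm i$ no longer suffice, since one cannot choose unit scalars $\varepsilon_1,\dots,\varepsilon_\alpha$ with all products $\bar\varepsilon_s\varepsilon_t$ ($s\neq t$) purely imaginary once $\alpha\ge 3$. This is where the Pauli matrices and, more generally, a quaternion/Clifford system enter. I would fix Hermitian unitaries $Q_1,\dots,Q_\alpha\in\bM_d$ that pairwise anticommute, $Q_sQ_t+Q_tQ_s=0$ for $s\neq t$ (the Pauli matrices $\sigma_1,\sigma_2,\sigma_3$ cover the smallest cases, the quaternion units a further one, Clifford generators the rest), and set $Z=\sum_s Q_s\otimes C_s$. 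The anticommutation together with $C_s^*C_t=C_t^*C_s$ kills every off‑diagonal term of $Z^*Z$, giving $Z^*Z=I_d\otimes\Delta$ and therefore $\|ZZ^*\|=\|I_d\otimes\Delta\|$. Now $ZZ^*=I_d\otimes H+\sum_{s\neq t}Q_sQ_t\otimes C_sC_t^*$, and each $Q_sQ_t$ ($s\neq t$) is traceless; averaging $ZZ^*$ over conjugations by $g\otimes I$, with $g$ ranging over the group generated by the $Q_s$, is a pinching that annihilates these off‑diagonal pieces and fixes $I_d\otimes H$. Since averaging unitary conjugates does not increase any symmetric norm, $\|I_d\otimes H\|\le\|I_d\otimes\Delta\|$, and as weak majorisation of the $d$‑fold repeated spectra is equivalent to weak majorisation of the original spectra, this yields $\|H\|\le\|\Delta\|$.

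The routine verifications—the cancellations in $Z^*Z$ and the pinching identity—are mechanical. The step I expect to be delicate is the bookkeeping attached to the auxiliary dimension $d$: constructing an anticommuting Hermitian unitary system of the right parity, checking that the twirl genuinely projects $\bM_d$ onto the scalars (irreducibility), and removing the residual factor $I_d\otimes(\,\cdot\,)$ without loss. For a small number of blocks, the regime named in the title, the $Q_s$ can be written down explicitly from the Pauli matrices and the quaternion units, so the averaging uses only a handful of unitaries and the whole argument becomes completely explicit, with $H$ displayed directly as an average of copies of $\Delta$.
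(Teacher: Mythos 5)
Your proof is correct, and it takes a genuinely different route from the paper, which for this statement only cites Hiroshima's original argument and then establishes stronger \emph{decompositions} for $\alpha\in\{2,3,4\}$: Theorem~\ref{thm-four} and Theorem~\ref{thm-quaternion} are obtained by equalizing the diagonal blocks through explicit unitary congruences (the matrix $W$, the quaternion-unit block matrix, the Hadamard-type $R_2$) and then invoking the decomposition Lemma~\ref{BL-lemma}. You instead factor $H=CC^*$ with $C=H^{1/2}$, encode the hypothesis as $C_s^*C_t=C_t^*C_s$, and arrange exact cancellation in $Z^*Z$ for $Z=\sum_s Q_s\otimes C_s$, extracting $I_d\otimes H$ from $ZZ^*$ by a finite twirl; this bypasses Lemma~\ref{BL-lemma} entirely and, unlike the paper, proves the theorem for \emph{every} $\alpha$, since Hermitian anticommuting unitaries exist in $\bM_d$ with $d=2^{\lfloor\alpha/2\rfloor}$. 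Your method even recovers the paper's sharper results in small cases: for $\alpha=2$, since $Z^*Z=Z'^*Z'=\Delta$ exactly, polar decompositions give $Z=U\Delta^{1/2}$, $Z'=V\Delta^{1/2}$ for isometries $U,V\in\bM_{2n,n}(\bC)$, whence $H=\frac{1}{2}(U\Delta U^*+V\Delta V^*)$, which is precisely Theorem~\ref{thm-four}; for $\alpha=3$ the Pauli choice $d=2$ with iterated two-term pinchings by $\sigma_1\otimes I$ and $\sigma_2\otimes I$ reproduces the shape of Theorem~\ref{thm-quaternion}. What the paper's route buys is economy of amplification: for $\alpha=4$ it keeps two copies by using the quaternion units \eqref{units}, which are unitaries with $E_sE_t^*$ skew-Hermitian for $s\neq t$ --- a weaker condition than your Hermitian anticommutation, yet still sufficient for the cancellation. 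Here your parenthetical ``the quaternion units a further one'' is slightly off: $i,j,k$ are skew-Hermitian, so under your Hermitian convention $\alpha=4$ forces $d=4$ and an eight-term average, which dilutes the extra eigenvalue inequalities of the type $\lambda_{1+4k}(H)\le\lambda_{1+k}(\Delta)$ (Corollary~\ref{cor4}) that the paper reads off its two-copy decomposition; relaxing to the skew-Hermitian-products condition repairs this. Two routine points you flagged are indeed harmless: irreducibility of the twirl is not needed, since for each $s\neq t$ the pairing $g\leftrightarrow gQ_s$ already shows the group average of $Q_sQ_t$ vanishes (conjugation by $Q_s$ flips its sign); and your de-amplification step is exactly the equivalence $\|\oplus^d H\|\le\|\oplus^d\Delta\|\iff\|H\|\le\|\Delta\|$ that the paper itself records after Theorem~\ref{thm-quaternion}, verified by comparing Ky Fan norms at indices $dk$.
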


\vskip 10pt
This result seems to be not so well-known among matrix-functional analysts. We recently  rediscovered it and actually obtained a stronger  decomposition theorem \cite{BLL3}. For small partitions,  when $\alpha\in\{2,3,4\}$,  special proofs are  available that differ from the general case in two related ways. First, these proofs are simpler (especially for $\alpha=2$) but also yield a much sharper decomposition than the one we can obtain in the general case. Secondly, and this is rather surprising, even though we consider a positive matrix with {\it real} entries, its decomposition  involves some {\it complex} matrices. This note is concerned with these special decompositions for small partitions.

In the next section we will see what can be said for $\alpha=2$. This situation was already implicitly covered in some proofs given in our first note \cite{BLL1}. Section 3 deals with the case $\alpha=4$ (and as a byproduct the case $\alpha=3$). 

\section{Two by two blocks}

\noindent
For partitions of positive matrices,
 the diagonal blocks  play a special role. This is apparent in a rather striking  decomposition  due  to  the two first authors \cite{BL1}:

\vskip 10pt
\begin{lemma} \label{BL-lemma} For every matrix in  $\bM_{n+m}^+$ partitioned into blocks, we have a decomposition
\begin{equation*}
\begin{bmatrix} A &X \\
X^* &B\end{bmatrix} = U
\begin{bmatrix} A &0 \\
0 &0\end{bmatrix} U^* +
V\begin{bmatrix} 0 &0 \\
0 &B\end{bmatrix} V^*
\end{equation*}
for some unitaries $U,\,V\in  \bM_{n+m}$.
\end{lemma}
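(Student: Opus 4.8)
The plan is to realize both the diagonal blocks and the additive splitting of $H$ from a single Hermitian factorization, and then to upgrade an equality of spectra into a unitary equivalence. First I would set $W=H^{1/2}$, the positive semidefinite square root, and partition its columns into the first $n$ and the last $m$, writing $W=[\,W_1\mid W_2\,]$ with $W_1$ of size $(n+m)\times n$ and $W_2$ of size $(n+m)\times m$. Because $W$ is Hermitian we have $H=W^*W$, and reading off the four blocks of the product $W^*W$ immediately identifies the diagonal blocks as $A=W_1^*W_1$ and $B=W_2^*W_2$ (the off-diagonal block being $X=W_1^*W_2$).

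Next I would exploit the same identity in the form $H=WW^*$ — valid again because $W$ is Hermitian — but now expand it the other way:
\begin{equation*}
H=WW^*=W_1W_1^*+W_2W_2^*.
\end{equation*}
This already exhibits $H$ as a sum of two positive semidefinite matrices in $\bM_{n+m}^+$, so it remains only to recognize each summand as a unitary conjugate of the corresponding diagonal block sitting in its place.

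For this last step I would invoke the standard fact that for any rectangular matrix $M$ the products $M^*M$ and $MM^*$ have the same nonzero eigenvalues with the same multiplicities. Applied to $M=W_1$, this shows that $W_1W_1^*$ and $A=W_1^*W_1$ share their nonzero spectrum; a dimension count (both $W_1W_1^*$ and $\begin{bmatrix}A&0\\0&0\end{bmatrix}$ are $(n+m)\times(n+m)$ and have rank equal to $\mathrm{rank}\,A$) forces the multiplicities of the zero eigenvalue to agree as well. Hence $W_1W_1^*$ and $\begin{bmatrix}A&0\\0&0\end{bmatrix}$ are Hermitian matrices with identical eigenvalue lists, so there is a unitary $U\in\bM_{n+m}$ with $W_1W_1^*=U\begin{bmatrix}A&0\\0&0\end{bmatrix}U^*$; the same argument applied to $W_2$ yields $V$ with $W_2W_2^*=V\begin{bmatrix}0&0\\0&B\end{bmatrix}V^*$. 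Substituting into the displayed splitting gives the claim.

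I expect the only delicate point to be the bookkeeping in this final step: one must check not merely that the nonzero spectra coincide but that the full eigenvalue multiplicities, including that of zero, match, which is precisely what guarantees an honest unitary equivalence rather than a mere agreement of the nonzero singular data. Everything else is a direct consequence of choosing the Hermitian square root, which is exactly what lets $W^*W$ supply the blocks while $WW^*$ supplies the additive decomposition.
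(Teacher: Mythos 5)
Your proof is correct and is essentially the same argument the paper relies on: the paper defers the proof of Lemma \ref{BL-lemma} to \cite{BL1} and \cite{BH1}, where the decomposition is obtained exactly as you do, by partitioning the Hermitian square root $W=H^{1/2}$ into column blocks so that $H=W^*W$ identifies $A=W_1^*W_1$, $B=W_2^*W_2$, while $H=WW^*=W_1W_1^*+W_2W_2^*$ provides the additive splitting, each summand being unitarily equivalent to the corresponding diagonal block padded with zeros since $MM^*$ and $M^*M$ share their nonzero spectrum. Your attention to the multiplicity of the zero eigenvalue (via the rank count $\mathrm{rank}\,W_1W_1^*=\mathrm{rank}\,W_1^*W_1=\mathrm{rank}\,A$) is exactly the bookkeeping needed to upgrade equality of nonzero spectra to a genuine unitary equivalence, and it matches the standard treatment.
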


\vskip 10pt
This decomposition turned out to be an efficient tool and it also plays a major role below.
A proof and several consequences can be found in \cite{BL1} and \cite{BH1}. 
Of course,  $\bM_n$ is the algebra of $n\times n$  matrices with real or complex entries, and $\bM_n^+$ is the positive  part. That is, $\bM_n$ may stand either for $\bM_n(\bR)$, the matrices with real entries, or for  $\bM_n(\bC)$, those with complex entries. The situation is different
in the next statement, where complex entries seem unavoidable. 

\vskip 10pt
\begin{theorem}\label{thm-four} Given any matrix in $\bM_{2n}^+(\bC)$ partitioned into blocks in $\bM_n(\bC)$ with Hermitian off-diagonal blocks, we have
\begin{equation*}
\begin{bmatrix} A &X \\
X &B\end{bmatrix}= \frac{1}{2}\left\{ U(A+B)U^* +V(A+B)V^*\right\}
\end{equation*} for some isometries $U,V\in\bM_{2n,n}(\bC)$.
\end{theorem}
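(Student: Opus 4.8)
The plan is to reduce the asserted identity to a single unitary conjugation and then to exhibit that unitary explicitly. Write $S=A+B$ for the partial trace and assume first that $S$ is invertible; the general case will follow by replacing $A,B$ with $A+\varepsilon I,\,B+\varepsilon I$, solving, and letting $\varepsilon\to0$, using that the set of isometries is compact. The key observation is that if $W=[\,W_1\ W_2\,]\in\bM_{2n}(\bC)$ is a unitary whose two $2n\times n$ block columns satisfy $W_1^*HW_1=W_2^*HW_2=\tfrac12 S$, then $U:=\sqrt2\,H^{1/2}W_1S^{-1/2}$ and $V:=\sqrt2\,H^{1/2}W_2S^{-1/2}$ do the job. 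Indeed $U^*U=2S^{-1/2}(W_1^*HW_1)S^{-1/2}=I_n$, so $U$ (and likewise $V$) is an isometry, while $USU^*=2H^{1/2}W_1W_1^*H^{1/2}$, and summing the two contributions gives $\tfrac12(USU^*+VSV^*)=H^{1/2}(W_1W_1^*+W_2W_2^*)H^{1/2}=H$ since $W_1W_1^*+W_2W_2^*=WW^*=I_{2n}$.

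So the whole problem collapses to one question: can one conjugate $H$ by a unitary so that both diagonal blocks of $W^*HW$ become the average $\tfrac12(A+B)$? This is the step I expect to be the crux, and it is here that complex entries become indispensable. The naive real candidate, the Hadamard-type rotation $\tfrac1{\sqrt2}\begin{bmatrix} I & I\\ I & -I\end{bmatrix}$, symmetrizes the $A$ and $B$ contributions correctly but leaves a residual $\pm X$ on the two diagonal blocks; it therefore fails whenever $X\neq0$, because the off-diagonal block $X$ refuses to cancel along real rotations.

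The idea that resolves this is that inserting the imaginary unit forces the unwanted $X$-terms to annihilate. I would take
\[
W=\frac1{\sqrt2}\begin{bmatrix} I_n & iI_n\\ iI_n & I_n\end{bmatrix},
\]
which is unitary, and check by a one-line block multiplication that
\[
W^*HW=\begin{bmatrix} \tfrac12(A+B) & X+\tfrac i2(A-B)\\ X-\tfrac i2(A-B) & \tfrac12(A+B)\end{bmatrix};
\]
the cross-terms $iX$ and $-iX$ produced on each diagonal corner cancel precisely because $X=X^*$, leaving $\tfrac12(A+B)$ in both corners. Taking $W_1=\tfrac1{\sqrt2}\begin{bmatrix} I\\ iI\end{bmatrix}$ and $W_2=\tfrac1{\sqrt2}\begin{bmatrix} iI\\ I\end{bmatrix}$, the hypotheses of the reduction hold and the construction of the first paragraph yields the required isometries $U,V$. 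Once this explicit $W$ is found the remaining verifications are routine, so the real content of the argument is the discovery that this particular complex unitary (essentially $\tfrac1{\sqrt2}(I_{2n}+i\,\sigma_x\otimes I_n)$) sends the Hermitian off-diagonal block off the diagonal.
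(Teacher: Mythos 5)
Your proof is correct, and its crux coincides with the paper's: both hinge on conjugating $H$ by a complex unitary that makes the two diagonal blocks equal to $\tfrac12(A+B)$ (the paper uses $W=\tfrac{1}{\sqrt2}\bigl[\begin{smallmatrix}-iI & iI\\ I & I\end{smallmatrix}\bigr]$, you use $\tfrac1{\sqrt2}\bigl[\begin{smallmatrix} I & iI\\ iI & I\end{smallmatrix}\bigr]$; either works, and both exploit $X=X^*$ to cancel the $\pm iX$ contributions on the diagonal, which is exactly where the complex entries become indispensable). Where you genuinely differ is the finish. The paper applies its Lemma \ref{BL-lemma} to $W^*HW$, writing it as a sum of two unitary orbits of $(A+B)\oplus 0$ and $0\oplus(A+B)$, and then extracts the relevant block columns of those unitaries as isometries. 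You bypass the lemma entirely with the explicit formulas $U=\sqrt2\,H^{1/2}W_1S^{-1/2}$ and $V=\sqrt2\,H^{1/2}W_2S^{-1/2}$, verified directly: $U^*U=2S^{-1/2}(W_1^*HW_1)S^{-1/2}=I_n$ and $\tfrac12(USU^*+VSV^*)=H^{1/2}(W_1W_1^*+W_2W_2^*)H^{1/2}=H$ since $WW^*=I_{2n}$; the $\varepsilon$-regularization with compactness of the set of isometries (the condition $V^*V=I_n$ being closed) legitimately removes the assumption that $S=A+B$ be invertible. In effect you have reproved, in closed form, precisely the special case of Lemma \ref{BL-lemma} that the theorem needs. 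Your route buys self-containedness and completely explicit isometries when $A+B>0$, at the cost of the perturbation-and-limit step; the paper's route is limit-free and leans on a lemma that it reuses anyway as the engine of Theorem \ref{thm-quaternion} in Section 3.
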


\vskip 10pt
Here $\bM_{p,q}(\bC)$ denote the space of $p$ rows and $q$ columns matrices with complex entries, and $V\in\bM_{p,q}(\bC)$ is  an isometry if $p\ge q$ and $V^*V=I_q$. Even for a matrix in $\bM_{2n}^+(\bR)$, it seems essential to use isometries with complex entries.

Theorem \ref{thm-four} is implicit in \cite{BLL1}. We detail here how it follows from Lemma \ref{BL-lemma}.

\vskip 10pt
\begin{proof}Taking the unitary matrix 
$$W=\frac{1}{\sqrt{2}}\begin{bmatrix} -iI&iI \\I &I\end{bmatrix},$$
 where $I$ is the identity of $\bM_n$, then
  \begin{equation*}
 W^*\begin{bmatrix} A &X \\
X &B\end{bmatrix}W= \frac{1}{2}\begin{bmatrix} A+B & \ast\\
\ast & A+B\end{bmatrix}
\end{equation*}
where $\ast$ stands for unspecified entries. By Lemma \ref{BL-lemma}, there are two unitaries $U,V\in\bM_{2n}$ partitioned into  equally sized matrices,
$$U=\begin{bmatrix} U_{11}&   U_{12}  \\ U_{21}  &   U_{22}\end{bmatrix},\qquad  V=\begin{bmatrix} V_{11}&   V_{12}  \\ V_{21}  &   V_{22}\end{bmatrix}$$
 such that
 $$
 \frac{1}{2}\begin{bmatrix} A+B & \ast\\
\ast & A+B\end{bmatrix}= \frac{1}{2}\left\{U\begin{bmatrix} A+B & 0\\0 &0\end{bmatrix}U^*+V\begin{bmatrix} 0& 0\\0 & A+B\end{bmatrix}V^*\right\}.
$$
 Therefore
$$  \frac{1}{2}\begin{bmatrix} A+B & \ast\\
\ast & A+B\end{bmatrix}=\frac{1}{2}\left\{ \widetilde{U}(A+B)\widetilde{U}^* +\widetilde{V}(A+B)\widetilde{V}^*\right\}$$
 where $$\widetilde{U}=\begin{bmatrix} U_{11} \\U_{21}\end{bmatrix}\quad {\rm and}\quad  \widetilde{V}=\begin{bmatrix} V_{11} \\V_{21}\end{bmatrix}$$
are isometries. The proof is complete by assigning $W\widetilde{U}$, $W\widetilde{V}$ to new isometries $U, V$, respectively. 
\end{proof}

\vskip 10pt
Theorem \ref{thm-four}
  yields \eqref{norm1}. As a consequence of this inequality we have a refinement of a well-known determinantal inequality,
$$
 \det(I+A)\det(I+B)\ge \det(I+A+B)
$$
for all $A,B\in\bM^+_n$.

\begin{cor} Let $A,B\in \bM_n^+$. For any Hermitian $X\in\bM_n$  such that 
$H=\begin{bmatrix} A&X \\X&B\end{bmatrix}$
is positive semi-definite, we have 
\begin{equation*}
\det(I+A)\det(I+B) \ge \det(I+H) \ge \det(I+A+B).
\end{equation*}
\end{cor}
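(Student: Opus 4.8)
The displayed chain splits into two independent inequalities, and I would establish them by quite different tools. The left-hand inequality $\det(I+A)\det(I+B)\ge\det(I+H)$ is a direct application of Fischer's determinantal inequality. Since $H\in\bM_{2n}^+$, the matrix $I+H$ is positive definite and is partitioned as
\[
I+H=\begin{bmatrix} I+A & X\\ X & I+B\end{bmatrix},
\]
so Fischer's inequality for a positive block matrix gives $\det(I+H)\le\det(I+A)\det(I+B)$ at once. No further work is needed here beyond invoking the classical result.

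The substance of the corollary is the right-hand inequality $\det(I+H)\ge\det(I+A+B)$, which I would deduce from Theorem \ref{thm-four}. That theorem yields the symmetric-norm inequality \eqref{norm1}, $\|H\|\le\|A+B\|$, which is equivalent to the weak majorization
\[
\sum_{j=1}^k\lambda_j(H)\le\sum_{j=1}^k\lambda_j(A+B),\qquad k=1,\dots,2n,
\]
between the eigenvalues of $H$ and those of $A+B$ (completed by zeros for $j>n$, as in \eqref{major1}). The key additional observation is that the two total sums agree, since $\operatorname{tr}H=\operatorname{tr}A+\operatorname{tr}B=\operatorname{tr}(A+B)$. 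Hence the weak majorization is in fact a genuine majorization $\lambda(H)\prec\lambda(A+B)$ of two vectors in $\bR^{2n}$.

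It then remains to feed this majorization into a suitable scalar function. I would use $\varphi(t)=\log(1+t)$, which is increasing and \emph{concave} on $[0,\infty)$; since $H$ and $A+B$ are positive, all eigenvalues lie in this interval. Because $\varphi$ is concave, majorization reverses the sum inequality, giving
\[
\sum_{j=1}^{2n}\log\bigl(1+\lambda_j(H)\bigr)\ge\sum_{j=1}^{2n}\log\bigl(1+\lambda_j(A+B)\bigr),
\]
which is precisely $\log\det(I+H)\ge\log\det(I+A+B)$. Exponentiating completes the right-hand inequality, and chaining the two bounds recovers and refines the stated classical inequality $\det(I+A)\det(I+B)\ge\det(I+A+B)$.

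The one place demanding care — and the step I expect to be the main obstacle — is the passage from weak to genuine majorization. Weak majorization alone is not enough: for a concave function the sum inequality can go the wrong way unless the total sums coincide (for instance $(1,0)\prec_w(2,0)$, yet $\log 2<\log 3$). Thus the trace identity $\operatorname{tr}H=\operatorname{tr}(A+B)$, together with the correct concavity direction of $\log(1+t)$, is exactly what makes the argument run; everything else is bookkeeping about padding the spectrum of $A+B$ with zeros to length $2n$.
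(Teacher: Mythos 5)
Your proof is correct and takes essentially the same route as the paper's: Fischer's inequality for the left bound, and the majorisation $H\prec A+B$ coming from \eqref{norm1}, fed into the concave function $\log(1+t)$, for the right bound. Your only addition is to make explicit the trace identity $\mathrm{Tr}\,H=\mathrm{Tr}(A+B)$ that upgrades the weak majorisation to a genuine one --- a step the paper leaves implicit in the notation $H\prec A+B$, and which is indeed necessary for the concave-function argument to run.
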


\vskip 10pt
Here $I$ denotes both the identity of $\bM_n$ and $\bM_{2n}$.
Note that equality obviously occurs in the first inequality when $X=0$, and equality occurs in the second inequality when $AB=BA$ and $X=A^{1/2}B^{1/2}$.

\vskip 10pt
\begin{proof} The left inequality is a special case of Fisher's inequality, 
$$
\det X\det Y \ge \det \begin{bmatrix} X&Z \\ Z^*&Y \end{bmatrix}
$$
for any partitioned positive semi-definite matrix. The second inequality follows from  \eqref{norm1}. Indeed,
the majorisation $S\prec T$ in $\bM_n^+$ entails the trace inequality
\begin{equation}\label{basic-concave}
{\mathrm{Tr\,}} f(S) \ge {\mathrm{Tr\,}} f(T)
\end{equation}
for all concave functions $f(t)$ defined on $[0,\infty)$. using \eqref{basic-concave} with $f(t)=\log(1+t)$ and the relation $H\prec A+B$ we have
\begin{align*}
\det(I+H) &=\exp{\mathrm{Tr\,}} \log(I+H) \\
&\ge\exp{\mathrm{Tr\,}} \log(I+((A+B)\oplus 0_n)) \\
&=\det(I+A+B).
\end{align*}
\end{proof}

\vskip 10pt

Theorem \ref{thm-four} says more than the eigenvalue majorisation \eqref{major1}. We have a few other eigenvalue inequalities as follows.

\vskip 10pt
\begin{cor}\label{cor-1} Let $H=\begin{bmatrix} A&X \\ X&B\end{bmatrix}\in\bM_{2n}^+$ be partitioned into   Hermitian blocks in $\bM_n$.   Then,  we have
\begin{equation*}
\lambda_{1+2 k}(H) \le \lambda_{1+k}( A+B)
\end{equation*} 
 for all $k=0,\ldots,n-1$.
\end{cor}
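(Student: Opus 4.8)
The plan is to feed Theorem \ref{thm-four} directly into a Weyl-type eigenvalue inequality for sums of Hermitian matrices. Write $C=A+B\in\bM_n^+$ and use the theorem to express $H=\frac12\{UCU^*+VCV^*\}$ for some isometries $U,V\in\bM_{2n,n}(\bC)$. Each of the two summands is a positive semidefinite matrix in $\bM_{2n}$ of rank at most $n$, and the whole matrix $H$ is their sum after scaling by $\frac12$.

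The first observation I would record is that conjugation by an isometry merely relabels the nonzero eigenvalues and pads the spectrum with zeros. Since $U^*U=I_n$, the matrices $UCU^*$ and $C$ have the same nonzero spectrum, so $\lambda_j(UCU^*)=\lambda_j(C)$ for $1\le j\le n$ and $\lambda_j(UCU^*)=0$ for $j>n$; the same holds for $V$. Setting $S=\frac12 UCU^*$ and $T=\frac12 VCV^*$, this gives $\lambda_{k+1}(S)=\lambda_{k+1}(T)=\frac12\lambda_{k+1}(C)$ for every $k=0,\ldots,n-1$, because then $k+1\le n$ and $\lambda_{k+1}(C)$ is a genuine eigenvalue of $C\in\bM_n$.

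Next I would invoke Weyl's inequality $\lambda_{i+j-1}(S+T)\le\lambda_i(S)+\lambda_j(T)$ and specialize it to $i=j=k+1$, so that $i+j-1=1+2k$. Since $H=S+T$, this yields
\begin{equation*}
\lambda_{1+2k}(H)\le\lambda_{k+1}(S)+\lambda_{k+1}(T)=\tfrac12\lambda_{k+1}(C)+\tfrac12\lambda_{k+1}(C)=\lambda_{1+k}(A+B),
\end{equation*}
which is precisely the claimed bound for $k=0,\ldots,n-1$.

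I do not expect a serious obstacle: essentially all the content is packaged inside Theorem \ref{thm-four}. The only point requiring a moment's care is the bookkeeping for the indices in Weyl's inequality, namely that the factor $2$ appearing in the index $1+2k$ is matched by summing the \emph{two} equal contributions $\lambda_{k+1}(S)+\lambda_{k+1}(T)$, while the averaging constant $\frac12$ cancels so that no spurious factor survives on the right-hand side.
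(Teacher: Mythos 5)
Your proof is correct and follows exactly the paper's route: the paper also combines Theorem \ref{thm-four} with Weyl's inequality $\lambda_{r+s+1}(Y+Z)\le\lambda_{r+1}(Y)+\lambda_{s+1}(Z)$ applied to the two halves of the decomposition, and your index bookkeeping (taking $r=s=k$, with each summand contributing $\tfrac12\lambda_{1+k}(A+B)$ since isometric conjugation only pads the spectrum with zeros) is precisely the detail the paper leaves implicit. Nothing is missing.
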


\vskip 10pt
\begin{proof} Together with Theorem \ref{thm-four}, the alleged inequalities  follow immediately from a simple fact, Weyl's theorem: if $Y, Z \in\bM_m$ are Hermitian,  then
\[\lambda_{r+s+1}(Y+Z)\le \lambda_{r+1}(Y) + \lambda_{s+1}(Z)\]
for all nonnegative integers $r,s$ such that $r + s\le m-1$.
\end{proof}

\vskip 10pt
\begin{cor}
 Let $S,T\in\bM_n$ be Hermitian. Then,
 \begin{equation*}
\| T^2 + ST^2S\| \le \| T^2 + TS^2T\|
\end{equation*}
for all symmetric norms, and
\begin{equation*}
\lambda_{1+2k}( T^2 + ST^2S) \le
 \lambda_{1+k}( T^2 + TS^2T)
\end{equation*} for all $k=0,\ldots,n-1$.
\end{cor}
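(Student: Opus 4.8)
The plan is to realize both quantities through a single rectangular ``square root,'' exploiting that $X^*X$ and $XX^*$ share the same nonzero spectrum. Concretely, I would introduce the $2n\times n$ matrix
\[
N=\begin{bmatrix} T \\ TS\end{bmatrix},
\]
and compute its two Gram matrices. Since $T$ is Hermitian and $(TS)^*=ST$, one finds on the one hand $N^*N=T^2+ST^2S$, and on the other hand
\[
NN^*=\begin{bmatrix} T^2 & TST \\ TST & TS^2T\end{bmatrix}.
\]
The point of this particular choice is that the off-diagonal block $TST$ is Hermitian, so $NN^*$ is exactly a positive matrix of the type covered by Theorem~\ref{thm-four} and Corollary~\ref{cor-1}; moreover its partial trace (the sum of its diagonal blocks) is precisely $T^2+TS^2T$, the right-hand side of both asserted inequalities.

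With this setup the two inequalities become almost immediate. For the norm statement, I would use that $N^*N$ and $NN^*$ have the same singular values up to extra zeros, so that $\|T^2+ST^2S\|=\|N^*N\|=\|NN^*\|$ for every symmetric norm; then \eqref{norm1} applied to $H=NN^*$ gives $\|NN^*\|\le\|T^2+TS^2T\|$. For the eigenvalue statement, the same spectral identity yields $\lambda_j(T^2+ST^2S)=\lambda_j(NN^*)$ for all $j$ (with the convention that eigenvalues past the matrix size vanish), and Corollary~\ref{cor-1} applied to $NN^*$ gives $\lambda_{1+2k}(NN^*)\le\lambda_{1+k}(T^2+TS^2T)$ for $k=0,\dots,n-1$. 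Chaining these two facts closes the argument.

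The only genuinely delicate point is the choice of $N$: the naive factor $\begin{bmatrix} T \\ ST\end{bmatrix}$ produces the non-Hermitian off-diagonal block $T^2S$ and, worse, interchanges the roles of the two sides, so that neither earlier result applies. The asymmetric placement of $TS$ rather than $ST$ in the lower block is what simultaneously makes the off-diagonal block Hermitian and sends the larger quantity $T^2+TS^2T$ to the partial trace while leaving the smaller quantity $T^2+ST^2S$ as the Gram matrix $N^*N$. Once this factorization is identified, no computation beyond the two displayed Gram products is needed, and I would expect the writeup to be short.
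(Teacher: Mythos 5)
Your proposal is correct and takes essentially the same approach as the paper: your $N$ is exactly the adjoint of the factor $\begin{bmatrix}T & ST\end{bmatrix}$ used there, so your two Gram matrices $N^*N$ and $NN^*$ coincide with the paper's products, and both arguments then conclude identically via the equality of nonzero spectra, the norm inequality \eqref{norm1}, and Corollary~\ref{cor-1}. Your closing observation about why the naive factor $\begin{bmatrix}T \\ ST\end{bmatrix}$ fails is apt but does not alter the route.
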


\begin{proof}
 The nonzero eigenvalues of $T^2+ST^2S=\begin{bmatrix}T& ST\end{bmatrix}\begin{bmatrix}T& ST\end{bmatrix}^*$ are the same as those of $$\begin{bmatrix}T& ST\end{bmatrix}^*\begin{bmatrix}T& ST\end{bmatrix}=\begin{bmatrix} T^2 & TST    \\ TST &  TS^2T\end{bmatrix}.$$
This block-matrix  is of course positive and has its off-diagonal blocks Hermitian. Therefore, the norm inequality follows from \eqref{norm1}, and the eigenvalue inequalities from Corollary \ref{cor-1}. The norm inequality was first observed in \cite{FL}. 
\end{proof}

\section{Quaternions and 4-by-4 blocks }

\noindent
Theorem \ref{thm-four} refines Hiroshima's theorem in case of two by two blocks. Some interesting new eigenvalue inequalities are obtained.  How to get a similar result for partitions into a larger number of blocks ? The question whether a positive block-matrix $H$ in $\bM^+_{3n}$, 
$$
H=\begin{bmatrix}
A&X&Y \\ X&B&Z \\Y&Z&C
\end{bmatrix}
$$
with Hermitian off-diagonal blocks $X,Y,Z$, can be decomposed as
$$
H=\frac{1}{3}\left\{ U\Delta U^*+ V\Delta V^* +W\Delta W^*\right\}
$$
where $\Delta=A+B+C$ and $U,V,W$ are isometries, is a difficult one. However, we will give a rather satisfactory answer by considering direct sums.  We have been unable to find any direct proof for partitions in 3-by-3 blocks. The key idea was then to introduce quaternions and to deal with 4-by-4 partitions. This approach leads to the following theorem.

\vskip 10pt
\begin{theorem}\label{thm-quaternion} Let $H=[A_{s,t}]\in \bM_{\beta n}^+(\bC)$ be partitioned into Hermitian  blocks in $\bM_n(\bC)$  with $\beta\in\{3,4\}$ and let $\Delta=\sum_{s=1}^{\beta}A_{s,s}$ be the sum of its diagonal blocks.Then,
\begin{equation*}
H\oplus H =\frac{1}{4}\sum_{k=1}^4 V_k\left(\Delta\oplus\Delta\right)V_k^*
\end{equation*} for some isometries $V_k\in\bM_{2\beta n,2n}(\bC)$, $k=1,2,3,4$.
\end{theorem}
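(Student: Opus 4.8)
The plan is to treat the main case $\beta=4$ by mimicking the proof of Theorem~\ref{thm-four}: first conjugate $H\oplus H$ by a single unitary that makes all four diagonal blocks equal to $\frac14(\Delta\oplus\Delta)$, and then strip off the off-diagonal blocks by iterating Lemma~\ref{BL-lemma}. Throughout I would use that, since every block $A_{s,t}$ is Hermitian and $H$ is Hermitian, one has $A_{t,s}=A_{s,t}^{*}=A_{s,t}$. After a permutation of the $8n$ coordinates I would view $H\oplus H$ as a $4\times4$ array of $2n\times2n$ blocks whose $(s,t)$ entry is the doubled block $I_2\otimes A_{s,t}=A_{s,t}\oplus A_{s,t}$, so that $\Delta\oplus\Delta=I_2\otimes\Delta$ is the sum of its diagonal blocks.

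The crucial ingredient is a quaternionic averaging unitary. I would fix the units $e_1=1,\ e_2=i,\ e_3=j,\ e_4=k$ of $\bH$ and the standard faithful representation $\bH\hookrightarrow\bM_2(\bC)$, $q\mapsto E_q$, which sends conjugation to the adjoint; it is precisely this replacement of a quaternionic scalar by a $2\times2$ complex matrix that forces the passage from $H$ to $H\oplus H$. Then I would set
$$\mathbf{w}=\Big[\tfrac12\,E_{e_pe_s}\otimes I_n\Big]_{s,p=1}^{4}\in\bM_{8n}.$$
The $(p,p)$ diagonal block of $\mathbf{w}^{*}(H\oplus H)\mathbf{w}$ is $\frac14\sum_{s,t}\big(E_{e_pe_s}^{*}E_{e_pe_t}\big)\otimes A_{s,t}$; the terms with $s=t$ add up to $I_2\otimes\Delta$, while pairing $(s,t)$ with $(t,s)$ and using $A_{t,s}=A_{s,t}$ leaves the scalar factor $E_{e_pe_s}^{*}E_{e_pe_t}+E_{e_pe_t}^{*}E_{e_pe_s}=2\langle e_pe_s,\,e_pe_t\rangle I_2=0$, because left translation by the unit quaternion $e_p$ preserves orthogonality of $e_s$ and $e_t$. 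Hence every diagonal block equals $\frac14(\Delta\oplus\Delta)$. That the same $\mathbf{w}$ is unitary follows from the identity $\sum_{s}\overline{e_s}\,r\,e_s=4\,\mathrm{Re}(r)$ applied to $r=\overline{e_p}\,e_q$, which gives $(\mathbf{w}^{*}\mathbf{w})_{p,q}=\langle e_p,e_q\rangle I_{2n}=\delta_{p,q}I_{2n}$. The obstruction that makes complex entries unavoidable lives exactly here: the construction needs four mutually orthogonal unit scalars, and these exist in $\bH$ but not in $\bC$, where only $1$ and $i$ are available, which is why $\beta=2$ costs two copies over $\bC$ whereas $3\le\beta\le4$ needs the quaternions.

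With $M=\mathbf{w}^{*}(H\oplus H)\mathbf{w}$ now a positive $4\times4$ block matrix all of whose diagonal blocks equal $D:=\frac14(\Delta\oplus\Delta)$, I would finish by applying Lemma~\ref{BL-lemma} twice. Splitting the index set into $\{1,2\}\cup\{3,4\}$ writes $M$ as a sum of two terms carried by the two diagonal $2\times2$ sub-arrays, and a second application of the lemma inside each sub-array produces four terms, each equal to $J_k(D\oplus0)J_k^{*}$ for some unitary $J_k$. Since $D\oplus0$ factors as the column isometry with top block $I_{2n}$ and bottom block $0$ applied to $D$, each term is $L_k\,D\,L_k^{*}$ with $L_k\in\bM_{8n,2n}$ an isometry, so $M=\frac14\sum_{k=1}^{4}L_k(\Delta\oplus\Delta)L_k^{*}$. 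Conjugating back and absorbing the initial permutation, $H\oplus H=\frac14\sum_{k=1}^{4}V_k(\Delta\oplus\Delta)V_k^{*}$ with the isometries $V_k=\mathbf{w}L_k$.

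For the byproduct $\beta=3$ I would pad, replacing $H$ by $H'=H\oplus0_n\in\bM_{4n}^{+}$, which has the same partial trace $\Delta$ and still Hermitian blocks; the case just proved gives $(H\oplus H)\oplus0_{2n}=\frac14\sum_k V_k(\Delta\oplus\Delta)V_k^{*}$, and since the left-hand side vanishes on the padded $2n$-dimensional block while each summand is positive, each $V_k(\Delta\oplus\Delta)^{1/2}$ has range in the $6n$-dimensional support; redefining the $V_k$ harmlessly on $\ker(\Delta\oplus\Delta)$ then yields isometries in $\bM_{6n,2n}$. I expect the genuine difficulty to be the construction of $\mathbf{w}$, namely finding a single quaternionic array that is simultaneously column-orthonormal (for unitarity) and has mutually orthogonal entries in each column (for the exact cancellation of all off-diagonal blocks); the left-regular choice $e_pe_s$ is what reconciles the two requirements, and the $\beta=3$ support cleanup is only a minor technical point.
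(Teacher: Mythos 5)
Your proposal is correct and follows essentially the same route as the paper: represent the quaternion units as $2\times2$ complex matrices (which forces the doubling $H\oplus H$), conjugate by a unitary that uses the mutual orthogonality of the units to cancel all off-diagonal contributions and make every diagonal block equal to $\frac14(\Delta\oplus\Delta)$, then iterate Lemma \ref{BL-lemma} to extract the four isometries, padding with zeros for $\beta=3$. Your single unitary $\mathbf{w}$ built from the left-regular entries $E_{e_pe_s}$ is just the composition of the paper's two conjugations (the block-diagonal $W=E_1\oplus\cdots\oplus E_4$ followed by the Hadamard-type $R_2$), and your quaternionic-orthogonality cancellation is the same mechanism as the paper's observation that $\Omega_{s,t}=-\Omega_{t,s}$; if anything, you treat the $\beta=3$ isometry-size reduction to $\bM_{6n,2n}$ more carefully than the paper does.
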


\vskip 10pt
Note that, for $\alpha=\beta\in\{3,4\}$, Theorem \ref{thm-quaternion} considerably improves Theorem \ref{Hiroshima}. Indeed, Theorem  \ref{thm-quaternion} implies the majorisation $\| H\oplus H \| \le \| \Delta \oplus \Delta \|$ which is equivalent to the majorisation of Theorem \ref{Hiroshima},  $\| H \| \le \| \Delta\|$.

Likewise for Theorem \ref{thm-four}, we must consider isometries with complex entries, even for a full matrix $H$ with real entries. In \cite{BLL3} we will develop a real approach for real matrices. The isometries are then with real coefficients, but the proof is more intricate and the result is not so simple since it requires direct sums of sixteen copies:
we obtain a decomposition of $\oplus^{16} H$ in term of $\oplus^{16} \Delta$.

Before turning to the proof, we recall some facts about quaternions.

The algebra $\bH$ of quaternions is an associative real division algebra of dimension four containing $\bC$ as a sub-algebra.
Quaternions $q$ are usually written as
$$
q=a+bi+cj +dk
$$
with $a,b,c,d\in\bR$ and $a+bi\in\bC$. The quaternion units $1,i,j,k$ satisfy
$$
i^2=j^2=k^2=ijk=-1.
$$
The algebra $\bH$ can be represented as the real sub-algebra of $\bM_2$ consisting of matrices of the form 
$$
\begin{pmatrix} z&-\overline{w} \\
w&\overline{z}
\end{pmatrix}
$$
by the identification map
$$
a+bi+cj+dk \mapsto \begin{pmatrix} a+bi & ic-d \\   ic+d & a-ib\end{pmatrix}.
$$
The quaternion units $1,i,j,k$ are then represented by the matrices (related to the Pauli matrices),
\begin{equation}\label{units}
\begin{pmatrix} 1& 0\\
0& 1
\end{pmatrix}, \quad 
\begin{pmatrix} i& 0\\
0& -i
\end{pmatrix}, \quad 
\begin{pmatrix}0 & i\\
i& 0
\end{pmatrix}, \quad 
\begin{pmatrix} 0& -1\\
1& 0
\end{pmatrix}
\end{equation}
that we will use in the following proof of Theorem \ref{thm-quaternion}. 

We will work with matrices in $\bM_{8n}$ partitioned in 4-by-4 blocks in $\bM_{2n}$.

\begin{proof} It suffices to consider the case $\beta=4$, the case $\beta=3$ follows by completing $H$ with some zero colums and rows.

First, replace the positive block matrix $H=[A_{s,t}]$ where $1\le s,t,\le 4$
and all blocks are Hermitian by a bigger one in which each block in counted two times :
$$G= [G_{s,t}]:= [A_{s,t}\oplus A_{s,t}].$$
Thus $G\in\bM_{8n}(\bC)$ is written in 4-by-4 blocks in $\bM_{2n}(\bC)$. Then perform a unitary congruence with the matrix
$$W =  E_1\oplus  E_2\oplus E_3 \oplus E_4$$
where the $E_i$ are the  analogues of  quaternion units, that is, with $I$ the identity of $\bM_n(\bC)$,
$$
E_1 = \begin{bmatrix}I&0 \\ 0&I\end{bmatrix},\quad
E_2 = \begin{bmatrix} iI&0 \\ 0&-iI\end{bmatrix},\quad
E_3 =\begin{bmatrix} 0&iI \\ iI&0\end{bmatrix},\quad
E_4 =\begin{bmatrix}0& -I \\ I&0\end{bmatrix}.
$$
Note that $E_sE_t^*$ is skew-Hermitian whenever $s\neq t$.
A direct matrix computation then shows  that the block matrix
$$
\Omega:=WGW^*=[ \Omega_{s,t}]
$$
has the following property for its off-diagonal blocks :
For $1\le s<t\le 4$
$$
\Omega_{s,t}=-\Omega_{t,s}.
$$
Using this property we compute the unitary congruence implemented by
$$
R_2=\frac{1}{2}\begin{bmatrix} 1&1&1&1 \\
1&-1& 1&-1 \\
1&1&-1&-1 \\
1&-1& -1&1 \\
\end{bmatrix}
\otimes\begin{bmatrix} I&0 \\ 0&I
\end{bmatrix}
$$
and we observe that
$
R_2\Omega R_2^*
$
has  its four diagonal blocks $(R_2\Omega R_2^*)_{k,k}$, $1\le k\le 4$, all equal to the matrix $D\in\bM_{2n}(\bC)$,
$$
D =\frac{1}{4}\sum_{s=1}^4 A_{s,s}\oplus A_{s,s}.
$$
Let $\Gamma=D\oplus 0_{6n}\in\bM_{8n}$.
Thanks to the decomposition  of Lemma \ref{BL-lemma},  there exist some unitaries $U_i\in\bM_{8n}(\bC)$, $1\le i\le 4$, such that
$$
\Omega=\sum_ {i=1}^4 U_i \Gamma U_i^*.
$$
That is, since $\Omega$ is unitarily equivalent to $H\oplus H$, and $\Gamma=WD W^*$ for some isometry $W\in\bM_{8n, 2n}(\bC)$,
$$
H\oplus H =   \sum_{s=1}^4 V_kDV_k^*
$$
for some isometries $V_k\in\bM_{8n,2n}(\bC)$. Since $D=\frac{1}{4}\Delta\oplus\Delta$, the proof is complete.
\end{proof}

\vskip 10pt
In the same vein as in Section 2, we have the following consequences. 

\vskip 10pt
\begin{cor} Let $H=[A_{s,t}]\in \bM_{\beta n}^+$ be written in  Hermitian blocks in $\bM_n$  with $\beta\in\{3,4\}$ and let $\Delta=\sum_{s=1}^{\beta}A_{s,s}$ be the sum of its diagonal blocks. Then,
\begin{equation*}
\prod_{s=1}^\beta\det(I+A_{ss}) \ge \det(I+H) \ge \det\left(I+\sum_{s=1}^\beta A_{ss}\right).
\end{equation*}
\end{cor}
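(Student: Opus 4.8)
The plan is to prove the two inequalities separately, in each case lifting the argument used for the two-block corollary of Section~2. For the left inequality I would invoke the Hadamard--Fischer (generalized Fisher) inequality. Since $H\ge 0$ is partitioned into $\beta\times\beta$ blocks, so is $I+H$, which is positive definite with diagonal blocks $I+A_{ss}$. Fisher's inequality for partitioned positive matrices then gives $\det(I+H)\le\prod_{s=1}^\beta\det(I+A_{ss})$, which is exactly the asserted upper bound. This is the direct $\beta$-block analogue of the first step in the corollary for two blocks.

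For the right inequality I would first extract a majorisation from Theorem~\ref{thm-quaternion}. Writing the decomposition $H\oplus H=\frac14\sum_{k=1}^4 V_k(\Delta\oplus\Delta)V_k^*$ as an average of isometric conjugates, and noting that each $V_k(\Delta\oplus\Delta)V_k^*$ has the same nonzero eigenvalues as $\Delta\oplus\Delta$ (hence the same value for every symmetric norm), the triangle inequality yields $\|H\oplus H\|\le\|\Delta\oplus\Delta\|$ for all symmetric norms. As observed just after the statement of Theorem~\ref{thm-quaternion}, this is equivalent to $\|H\|\le\|\Delta\|$, that is, to the majorisation $H\prec\Delta$ where $\Delta$ is viewed inside $\bM_{\beta n}$ by padding with zeros. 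Applying the concave trace inequality \eqref{basic-concave} with $f(t)=\log(1+t)$ then gives
$$\det(I+H)=\exp{\mathrm{Tr\,}}\log(I+H)\ge\exp{\mathrm{Tr\,}}\log\bigl(I+(\Delta\oplus 0)\bigr)=\det\Bigl(I+\sum_{s=1}^\beta A_{ss}\Bigr),$$
which is the lower bound.

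Since every ingredient is already in place, I expect no genuine obstacle; the only points requiring care are purely formal. First, one must confirm that the \emph{doubled} decomposition of Theorem~\ref{thm-quaternion} collapses to the single majorisation $H\prec\Delta$: the eigenvalues of $H\oplus H$ are those of $H$ with multiplicities doubled, so $H\oplus H\prec\Delta\oplus\Delta$ holds if and only if $H\prec\Delta$. Second, one must check that $t\mapsto\log(1+t)$ is concave on $[0,\infty)$ so that \eqref{basic-concave} points in the right direction. Both facts are used already in the two-block corollary, so the argument is essentially a transcription of that proof with Fisher's inequality and Theorem~\ref{thm-quaternion} replacing their two-block counterparts.
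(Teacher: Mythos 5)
Your proposal is correct and follows essentially the same route as the paper, which gives no separate details for this corollary but states that the proof is quite similar to the two-block case of Section~2: there the upper bound comes from Fisher's inequality and the lower bound from the majorisation combined with \eqref{basic-concave} applied to the concave function $f(t)=\log(1+t)$, exactly your two ingredients, with the majorisation $\|H\|\le\|\Delta\|$ extracted from Theorem~\ref{thm-quaternion} precisely as the paper indicates in the remark following that theorem. Your added verifications (the equivalence of the doubled majorisation $H\oplus H\prec\Delta\oplus\Delta$ with $H\prec\Delta$, and implicitly the trace equality $\mathrm{Tr}\,H=\mathrm{Tr}\,\Delta$ that makes $\prec$ a genuine majorisation) are sound and merely make explicit what the paper leaves to the reader.
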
 
\vskip 10pt

\vskip 10pt
\begin{cor}\label{cor4}  Let $H=[A_{s,t}]\in \bM_{\beta n}^+$ be written in  Hermitian blocks in $\bM_n$  with $\beta\in\{3,4\}$ and let $\Delta=\sum_{s=1}^{\beta}A_{s,s}$ be the sum of its diagonal blocks. Then,
\begin{equation*}
\lambda_{1+4 k}(H) \le \lambda_{1+k}( A+B)
\end{equation*}
 for all $k=0,\ldots,n-1$.
\end{cor}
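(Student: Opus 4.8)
The plan is to deduce the inequality from Theorem \ref{thm-quaternion} in exactly the way Corollary \ref{cor-1} was deduced from Theorem \ref{thm-four}, the only additional work being the index bookkeeping forced by the two copies appearing in $H\oplus H$ and $\Delta\oplus\Delta$. (Here the $A+B$ in the statement should read $\Delta=\sum_{s=1}^\beta A_{s,s}$.) First I would invoke Theorem \ref{thm-quaternion} to write
$$ H\oplus H=\sum_{k=1}^4 Y_k,\qquad Y_k:=\tfrac14 V_k(\Delta\oplus\Delta)V_k^*, $$
with isometries $V_k\in\bM_{2\beta n,2n}(\bC)$. Each $Y_k$ is positive semidefinite, and since $V_k^*V_k=I_{2n}$ an isometric congruence leaves the nonzero spectrum unchanged; hence the eigenvalues of $Y_k$ are $\tfrac14\lambda_j(\Delta\oplus\Delta)$ for $1\le j\le 2n$ and $0$ afterwards.

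Next I would record the elementary doubling identity $\lambda_j(M\oplus M)=\lambda_{\lceil j/2\rceil}(M)$, valid for any Hermitian $M$ under the usual zero convention. Applied to $H$ it gives $\lambda_{1+8k}(H\oplus H)=\lambda_{1+4k}(H)$, and applied to $\Delta$ it gives
$$ \lambda_{1+2k}(Y_k)=\tfrac14\lambda_{1+2k}(\Delta\oplus\Delta)=\tfrac14\lambda_{k+1}(\Delta)=\tfrac14\lambda_{1+k}(\Delta). $$
Iterating three times the Weyl inequality quoted in the proof of Corollary \ref{cor-1} yields, for Hermitian $Y_1,\dots,Y_4$,
$$ \lambda_{1+r_1+r_2+r_3+r_4}\Big(\sum_{i=1}^4 Y_i\Big)\le\sum_{i=1}^4\lambda_{1+r_i}(Y_i). $$
I would then simply take all shifts $r_i=2k$: the left side becomes $\lambda_{1+8k}(H\oplus H)=\lambda_{1+4k}(H)$, while the right side is $4\cdot\tfrac14\lambda_{1+k}(\Delta)=\lambda_{1+k}(\Delta)$, which is precisely the claim.

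The one genuinely careful point, and the only place where something could misbehave, is the index arithmetic together with the zero convention, especially for $\beta=3$. The Weyl step needs $1+8k\le 2\beta n$; for $\beta=4$ this is exactly the stated range $k\le n-1$, but for $\beta=3$ it can fail when $n$ is large. In that regime, however, $1+8k>6n$ forces $1+4k>3n$, so that both $\lambda_{1+8k}(H\oplus H)$ and $\lambda_{1+4k}(H)$ vanish and the asserted inequality holds trivially; thus the doubling identity and the final bound remain valid across the whole range $k=0,\dots,n-1$. Checking these ceilings and the compatibility of the two vanishing thresholds is routine, so I expect no real obstacle: the argument is a direct transcription of the two-block case modulo this bookkeeping, with the factor $\tfrac14$ from the four copies cancelling against the four-term Weyl sum.
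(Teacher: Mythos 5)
Your proof is correct and is essentially the argument the paper intends: the paper justifies this corollary only by remarking that it follows from Theorem \ref{thm-quaternion} just as Corollary \ref{cor-1} follows from Theorem \ref{thm-four}, which is precisely your iterated Weyl inequality combined with the doubling identity $\lambda_j(M\oplus M)=\lambda_{\lceil j/2\rceil}(M)$ to convert $\lambda_{1+8k}(H\oplus H)$ into $\lambda_{1+4k}(H)$ and $\frac14\lambda_{1+2k}(\Delta\oplus\Delta)$ into $\frac14\lambda_{1+k}(\Delta)$. You also correctly read the statement's $A+B$ as $\Delta$ (a typo in the paper), and your treatment of the out-of-range indices for $\beta=3$ via the zero convention is sound --- alternatively one can pad $H$ with zero rows and columns to reduce to $\beta=4$, exactly as the paper does in the proof of Theorem \ref{thm-quaternion} itself.
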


\vskip 10pt
\begin{cor}\label{detail}
 Let $T\in\bM_{n}$ be Hermitian and let $\{S_i\}_{i=1}^\beta\in\bM_{n}$ be commuting Hermitian matrices with $\beta\in\{3,4\}$. Then,
 \begin{equation*}
\left\|\sum_{i=1}^\beta S_iT^2S_i\right\| \le \left\|\sum_{i=1}^\beta TS_i^2T\right\|
\end{equation*}
for all symmetric norms, and
\begin{equation*}
\lambda_{1+4 k}\left(\sum_{i=1}^{\beta} S_iT^2S_i\right) \le
 \lambda_{1+k}\left(\sum_{i=1}^{\beta} TS_i^2T\right)
\end{equation*} for all $k=0,\ldots,n-1$.
\end{cor}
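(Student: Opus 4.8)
The plan is to follow the $RR^*$ versus $R^*R$ device already used for the two-block corollary in Section 2. I would set
$$R = \begin{bmatrix} S_1 T & S_2 T & \cdots & S_\beta T \end{bmatrix}\in\bM_{n,\beta n}(\bC),$$
so that, since $T$ and the $S_i$ are Hermitian,
$$RR^* = \sum_{i=1}^\beta (S_iT)(S_iT)^* = \sum_{i=1}^\beta S_iT^2S_i,$$
the left-hand operator. Its companion $R^*R=[\,TS_iS_jT\,]_{1\le i,j\le\beta}$ is a matrix in $\bM_{\beta n}^+$ partitioned into $\beta\times\beta$ blocks in $\bM_n$, whose diagonal blocks $TS_i^2T$ sum to $\Delta:=\sum_{i=1}^\beta TS_i^2T$, the right-hand operator. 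Taking $\beta=2$, $S_1=I$, $S_2=S$ recovers the corollary of Section 2.

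The one point that needs the hypotheses is that the off-diagonal blocks of $R^*R$ be Hermitian, so that Theorem \ref{thm-quaternion} and Corollary \ref{cor4} apply. Since $(TS_iS_jT)^*=TS_jS_iT$, this holds exactly because the $S_i$ commute, and I expect this to be the only genuine subtlety: the commuting assumption is used precisely to make the off-diagonal blocks Hermitian. Thus $R^*R$ satisfies the hypotheses of Theorem \ref{thm-quaternion} (hence of Theorem \ref{Hiroshima}) and of Corollary \ref{cor4}, with sum of diagonal blocks equal to $\Delta$.

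It then remains to transport the conclusions from $R^*R$ back to $RR^*$. As $RR^*$ and $R^*R$ share the same nonzero eigenvalues with multiplicities, they have equal symmetric norms and equal ordered eigenvalue sequences. For the norm inequality this gives
$$\Big\|\sum_{i=1}^\beta S_iT^2S_i\Big\|=\|RR^*\|=\|R^*R\|\le\|\Delta\|=\Big\|\sum_{i=1}^\beta TS_i^2T\Big\|,$$
the middle inequality being Theorem \ref{thm-quaternion} (equivalently Theorem \ref{Hiroshima}). For the eigenvalue inequalities, the identity $\lambda_{1+4k}(\sum_i S_iT^2S_i)=\lambda_{1+4k}(R^*R)$ together with Corollary \ref{cor4} applied to $H=R^*R$ yields $\lambda_{1+4k}(R^*R)\le\lambda_{1+k}(\Delta)=\lambda_{1+k}(\sum_i TS_i^2T)$, which is the claim.
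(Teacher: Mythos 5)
Your proposal is correct and is essentially the paper's own proof: the paper forms the column $X$ with entries $TS_i$ and applies Theorem \ref{thm-quaternion} to $H=XX^*=[TS_iS_jT]$, which is exactly your $R^*R$ (with $R=X^*$), then transfers the bound to $X^*X=\sum_i S_iT^2S_i$ through the equality of the nonzero spectra of $XX^*$ and $X^*X$, just as you do. The only cosmetic differences are that the paper reduces $\beta=3$ to $\beta=4$ by adjoining $S_4=0$ (unnecessary in your version since the cited results cover $\beta=3$ directly) and details only the norm inequality, leaving the eigenvalue part to Corollary \ref{cor4} as you carry out explicitly.
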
 

The proofs of these corollaries are quite similar of those of Section 2. We give details only for the norm inequality of Corollary \ref{detail}.

\vskip 10pt
\begin{proof} We may assume that $\beta=4$ by completing, if necessary with $S_4=0$. So, let $T\in\bM_n^+$ and let $\{S_i\}_{i=1}^4$ be    four commuting Hermitian matrices in $\bM_n$. Then
$$ H= XX^*=
\begin{bmatrix} TS_1\\ TS_2\\ TS_3\\ TS_4
\end{bmatrix}
\begin{bmatrix} S_1T & S_2T &S_3T  &S_4T
\end{bmatrix}
$$
is positive and partitioned into Hermitian blocks with diagonal blocks $TS_i^2T$, $1\le i\le 4$. Thus, from Theorem \ref{thm-quaternion}, for all symmetric norms,
$$
\| H\oplus H\| \le \left\| \left\{\sum_{i=1}^4 TS_i^2T\right\} \oplus \left\{\sum_{i=1}^4 TS_i^2T \right\}\right\|
$$
or equivalently
$$
\| H \| \le \left\| \sum_{i=1}^4 TS_i^2T\right\|
$$
Since $H=XX^*$ and $X^*X=\sum_{i=1}^4 S_iT^2S_i$, the norm inequality of Corollary \ref{detail} follows.
\end{proof}

\vskip 50pt

J.-C. Bourin,

Laboratoire de math\'ematiques,

Universit\'e de Franche-Comt\'e,

25 000 Besan\c{c}on, France.

jcbourin@univ-fcomte.fr

\vskip 10pt
Eun-Young Lee

 Department of mathematics,

Kyungpook National University,

 Daegu 702-701, Korea.

eylee89@ knu.ac.kr

\vskip 10pt
Minghua Lin

 Department of applied mathematics,

University of Waterloo,

 Waterloo, ON, N2L 3G1, Canada.

mlin87@ymail.com


\begin{thebibliography}{99}
{\small
 

\bibitem{BH1} J.-C.\ Bourin and F.\ Hiai,
Norm and anti-norm inequalities for positive semi-definite matrices,
{\it Internat. J. Math.} \textbf{22} (2011), 1121-1138.

 


 \bibitem{BL1}
J.-C. Bourin and E.-Y. Lee, Unitary orbits of Hermitian operators with convex and
concave functions, {\it Bull.\ London Math.\ Soc.}, in press.

\bibitem{BLL1}
J.-C. Bourin, E.-Y. Lee and  M.\ Lin, On a decomposition lemma for positive semi-definite block-matrices, {\it Linear Algebra Appl.}   \textbf{437} (2012), 1906-1912.

\bibitem{BLL3}
J.-C. Bourin, E.-Y. Lee and  M.\ Lin, Positive definite matrices with Hermitian blocks, in preparation.


\bibitem{FL} S.\ Furuichi and M.\ Lin, A matrix trace inequality and its application, {\it Linear Algebra Appl.} \textbf{433} (2010), 1324-1328.

 




\bibitem{H} T.\ Hiroshima, Majorization Criterion for Distillability of a Bipartite Quantum State, (ArXiv: quantum-ph, 2003).
}

\end{thebibliography}
\end{document}